\newenvironment{proof}[1][Proof]{\noindent\textbf{#1} }{\ \rule{0.5em}{0.5em}}
\newtheorem{proposition}{Proposition}
\begin{document}

\begin{center}
{\LARGE  Information Percolation}\\

\bigskip


\bigskip

\centerline{Darrell Duffie}
\centerline{Stanford University}

\medskip

\centerline{Gaston Giroux} \centerline{unaffiliated}

\medskip

\centerline{Gustavo Manso}
\centerline{Massachusetts Institute of Technology}

\bigskip

\centerline{\today}
\end{center}

\section{Introduction}

For a setting in which a large number of asymmetrically informed
agents are randomly matched into groups over time, exchanging their
information with each other when matched, we provide an explicit
solution for the dynamics of the cross-sectional distribution of
posterior beliefs. We also show that convergence of the cross-sectional 
distribution of beliefs to a common posterior is exponential and
that the rate of convergence does not depend on the size of the
groups of agents that meet. The rate of convergence is merely the mean
rate at which an individual agent is matched.

For example, suppose that each agent has $\lambda$ meetings per
year, in expectation. At each meeting, say an auction, $m-1$ other
agents are randomly selected to attend. Each agent at the meeting
reveals to the others a summary statistic of his or her posterior,
such as a bid for an asset, reflecting the agent's originally
endowed information and any information learned from prior meetings.
Over time, the conditional beliefs held across the population of
agents regarding a variable of common concern (such as the payoff of
the auctioned asset) converge to a common posterior. We construct an
associated mathematical model of information transmission and
calculate explicitly the cross-sectional distribution of the
posterior beliefs held by the agents at each time. We show that 
convergence of these posteriors to a common posterior is exponential
at the rate of $\lambda$, regardless of the number $m$ of agents at 
each meeting.

An important role of markets and organizations, as argued for
example by Hayek (1945) and Arrow (1974), is to facilitate the
transmission of information that is dispersedly held by its
participants. Our results suggest that varying the size of the
groups in which individuals exchange information does not facilitate
information transmission, at least in terms of the rate of
convergence of posteriors. This point is further addressed at the
end of the paper.

Previous studies have considered the problem of information
aggregation in various contexts. For example, Grossman (1981) 
proposes the concept of rational-expectations equilibrium to 
capture the idea that prices aggregate information that is
disperse in the economy.
Wilson (1977), Milgrom (1981), Vives (1993), Pesendorfer and 
Swinkels (1997), and Reny and Perry (2006) provide strategic 
foundations for the rational-expectations equilibrium concept 
in centralized markets.
In a number of important settings, however, agents learn
from local interactions. For example, in over-the-counter
markets, agents learn from the bids of other agents in 
privately held auctions. Wolinsky (1990) and Blouin and 
Serrano (2002) study information percolation in
these markets.%
\footnote{Rubinstein and Wolinsky (1985) and Gale (1986a, 1986b) 
study decentralized markets without asymmetric information. 
Satterthwaite and Shneyerov (2003) study decentralized markets with
private-value asymmetric information.} 
In social learning settings, 
agents learn from direct interactions
with other agents. Banerjee and Fudenberg (2004) study
information percolation in a social learning context. 
In contrast to previous studies of learning through local 
interactions, we allow for meetings that have more 
than two agents, and we explicitly characterize the 
percolation of information and provide rates of convergence 
of the cross-sectional distribution of 
beliefs to a common posterior.

Our results extend those of Duffie and Manso (2007), who provided an
explicit formula for the Fourier transform of the cross-sectional
distribution of posterior beliefs in the same setting, but did not
offer an explicit solution for the distribution itself, and did not
characterize the rate of convergence of the distribution.

Section \ref{sec:basic} provides the model setting. Section
\ref{sec:two} provides our results for the traditional search-market
setting of bilateral $(m=2)$ contacts. This also serves as an
introduction to the results for the case of general $m$, which are
presented in Section \ref{sec:m}.

\section{The Basic Model}
\label{sec:basic}

The model of information percolation is that of Duffie and Manso
(2007). A probability space $(\Omega, {\cal F}, {\rm P})$ and a
``continuum'' (a non-atomic finite measure space $(G, {\cal
G},\gamma)$) of agents are fixed. Without loss of generality, the
total quantity $\gamma(G)$ of agents is 1. A random variable $X$ of
potential concern to all agents has two possible outcomes, $H$
(``high'') and $L$ (``low''), with respective probabilities $\nu$
and $1-\nu$.

Each agent is  initially endowed with a sequence of signals that may
be informative about $X$. The signals $\{s_1,\ldots,s_n\}$
primitively observed by a particular agent are, conditional on $X$,
independent with outcomes 0 and 1 (Bernoulli trials). The number
$n\geq 0$ of signals as well as the probability distributions of the
signals may vary across agents. Without loss of generality, we
suppose that $\operatorname{P}(s_i = 1 \,|\, H) \geq
\operatorname{P}(s_i = 1 \,|\, L)$.  A signal $i$ is
\textit{informative} if $\operatorname{P}(s_i = 1 \,|\, H) >
\operatorname{P}(s_i = 1 \,|\, L)$. For any pair of agents, their
sets of originally endowed signals are independent.

By Bayes' rule, conditional on signals $\{s_1,\ldots,s_n\}$, the
posterior probability that $X$ has a high outcome is
\begin{equation}
\operatorname{P}(X=H \,|\, s_1,\ldots,s_n) = \left[ 1+
\frac{1-\nu}{\nu} \left(\frac{1}{2}\right)^{\theta} \right]^{-1},
\label{eq:beliefs}
\end{equation}
where the ``type'' $\theta$ of this set of signals is
\begin{equation}
\theta = \sum_{i=1}^{n} \left(s_i \operatorname{log}_{1/2}
\frac{\operatorname{P}(s_i = 1 \,|\, L)}%
{\operatorname{P}(s_i = 1 \,|\, H)} +
(1-s_i)\operatorname{log}_{1/2}\frac{1-\operatorname{P}(s_i = 1
\,|\, L)} {1-\operatorname{P}(s_i = 1 \,|\, H)}\right).
\label{eq:types}
\end{equation}
The higher the type $\theta$ of the set of signals, the higher is
the posterior probability that $X$ is high.

Any particular agent is matched to other agents at each of a
sequence of Poisson arrival times with a mean arrival rate
(intensity) $\lambda$, which is common across agents. At each
meeting time, $m-1$ other agents are randomly selected from the
population of agents.\footnote{That is, each of the $m-1$ matched
agents is chosen at random from the population, without replacement,
with the uniform distribution, which we can take to be the
agent-space measure $\gamma.$  Duffie and Sun (2007) provide a
complete construction for independent random matching from a large
set (a non-atomic measure space) of agents, for the case $m=2$.} The
meeting group size $m$ is a parameter of the information model that
we shall vary. We assume that, for almost every pair of agents, the
matching times and counterparties of one agent are independent of
those of the other. We do not show the existence of such a random
matching process.\footnote{For the case of groups of size $m=2$,
Duffie and Sun (2007) show existence for the discrete-time analogue
of this random matching model. For the case of a finite number of
agent types, the associated exact law of large numbers for the
cross-sectional distribution of the type processes is provided by
Duffie and Sun (2005). Giroux (2005) proves convergence of the
naturally associated finite-agent discrete-time model to the
analogous continuous-time model matching model
of Duffie, G\^arleanu, and Pedersen (2005) as the number of agents grows
large.}

Suppose that whenever agents meet they communicate to each other
their posterior probabilities, given all information to the point of
that encounter, of the event that $X$ is high. Duffie and Manso
(2007) provide an example of a market setting in which this
revelation of beliefs occurs through the observation of bids
submitted by risk-neutral investors in an auction for a forward
contract on an asset whose payoff is $X$. From Proposition $3$ in
Duffie and Manso (2007), whenever an agent of type $\theta$ meets an
agent with type $\phi$ and they communicate to each other their
posterior distributions of $X$, they both attain the posterior type
$\theta+\phi$. The same proof implies that whenever $m$ agents of
respective types $\phi_1,\ldots,\phi_m$ share their beliefs, they
attain the common posterior type $\phi_1+\cdots+\phi_m$.

We let $\mu_t$ denote the cross-sectional distribution of posterior
types in the population at time $t$. That is, for any real interval
$(a,b)$, $\mu_t((a,b))$ (also denoted $\mu_t(a,b)$ for simplicity)
is the fraction of the population whose type at time $t$ is in
$(a,b)$. Because the total quantity $\gamma(G)$ of agents is 1, we
can view $\mu_t$ as a probability distribution. The initial
distribution $\mu_0$ of types is that induced by some particular
initial allocation of signals to agents. In the following analysis
we assume that there is a positive mass of agents that has at
least one informative signal. This implies that the first
moment $m_1(\mu_0)$ is strictly positive if $X=H$, and that
$m_1(\mu_0)<0$ if $X=L$. We assume that the initial law $\mu_0$ has
a moment generating function, $z\mapsto \int e^{z\theta}\,
\mu_0(d\theta)$, that is finite on a neighborhood of $z=0$.

\section{Two-Agent Meetings}
\label{sec:two}

We now calculate the explicit belief distribution in the population
at any given time, and the rate of convergence of beliefs to a
common posterior, in a setting with $m=2$  agents at each meetings.
This is the standard setting for search-based models of labor,
money, and asset markets. In this setting, the cross-sectional
distribution of types is determined by the evolution equation
\begin{equation}
\mu_t = \mu_0 + \lambda \int_0^t (\mu_s*\mu_s-\mu_s)\, ds,
\label{eq:evolution}
\end{equation}
where $*$ is the convolution operator. This is intuitively
understood if $\mu_t$ has a density $f_t(\,\cdot\,)$, in which case
the density $f_t(\theta)$ of agents of type $\theta$ is reduced at
the rate $\lambda f_t(\theta)$ at which agents of type $\theta$ meet
other agents and change type, and is increased at the aggregate rate
$\lambda \int f_t(\theta-y)f_t(y)\, dy$ at which an agent of some
type $y$ meets an agent of type $\theta -y$, and therefore becomes
an agent of type $\theta$.

The following result provides an explicit solution for the
cross-sectional type distribution, in the form of a Wild
summation.\footnote{See Wild (1951).}
\begin{proposition}
The unique solution of (\ref{eq:evolution}) is
\begin{equation}
\mu_t = \sum\limits_{n\geq 1} e^{- \lambda t} (1-e^{-\lambda
t})^{n-1} \mu_0^{*n}, \label{eq:wild}
\end{equation}
where $\nu^{*n}$ denotes the $n$-fold convolution of a measure
$\nu$.
 \label{prop:wild}
\end{proposition}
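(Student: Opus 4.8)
The plan is to linearize the convolution by passing to the Fourier transform, which turns the functional evolution equation into a one-parameter family of scalar ordinary differential equations, each solvable in closed form; recognizing the closed form as the transform of the Wild series then yields existence and uniqueness simultaneously. Concretely, I would fix $u\in\mathbb{R}$ and set $\varphi(t)=\widehat{\mu_t}(u)=\int e^{iu\theta}\,\mu_t(d\theta)$. Pairing (\ref{eq:evolution}) with $\theta\mapsto e^{iu\theta}$, using $\widehat{\nu*\rho}=\widehat{\nu}\,\widehat{\rho}$ and Fubini's theorem (legitimate since every integrand is bounded by $2$ and $[0,t]$ has finite length), gives the scalar integral equation $\varphi(t)=\varphi(0)+\lambda\int_0^t(\varphi(s)^2-\varphi(s))\,ds$. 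Since an admissible solution $s\mapsto\mu_s$ of (\ref{eq:evolution}) is a weakly continuous path of probability measures, $\varphi$ is continuous, hence $C^1$, and solves the Bernoulli equation $\dot\varphi=\lambda\,\varphi(\varphi-1)$ with $\varphi(0)=\widehat{\mu_0}(u)$.

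Next I would solve this equation explicitly. The substitution $w=1/\varphi$ (treating $\widehat{\mu_0}(u)=0$, where $\varphi\equiv0$, separately) reduces it to the linear equation $\dot w=\lambda(w-1)$, so $w(t)=1+(w(0)-1)e^{\lambda t}$ and
\[
\varphi(t)=\frac{e^{-\lambda t}\,\widehat{\mu_0}(u)}{1-(1-e^{-\lambda t})\,\widehat{\mu_0}(u)}.
\]
The denominator never vanishes because $|\widehat{\mu_0}(u)|\le1$ and $0\le 1-e^{-\lambda t}<1$, and the same bounds give $|\varphi(t)|\le1$ for all $t\ge0$. Since the right-hand side of $\dot\varphi=\lambda\varphi(\varphi-1)$ is locally Lipschitz in $\varphi$ and any solution that remains a characteristic function is a priori bounded by $1$, Gronwall's inequality furnishes a uniform Lipschitz constant and hence uniqueness of the solution on all of $[0,\infty)$. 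Thus the displayed formula is the only solution of the transformed equation, and this is what will deliver uniqueness for (\ref{eq:evolution}).

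The last step is to invert the transform. Since $|(1-e^{-\lambda t})\,\widehat{\mu_0}(u)|<1$, I would expand the displayed formula as a geometric series, $\varphi(t)=\sum_{n\ge1}e^{-\lambda t}(1-e^{-\lambda t})^{n-1}\widehat{\mu_0}(u)^n$. Meanwhile the right-hand side of (\ref{eq:wild}) is a bona fide probability measure: the weights $e^{-\lambda t}(1-e^{-\lambda t})^{n-1}$ are those of a geometric law and sum to $1$, so the series converges in total variation, and its Fourier transform equals $\sum_{n\ge1}e^{-\lambda t}(1-e^{-\lambda t})^{n-1}\widehat{\mu_0}(u)^n=\varphi(t)=\widehat{\mu_t}(u)$ for every $u$. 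By the uniqueness theorem for Fourier transforms of finite measures, $\mu_t$ coincides with the Wild series for each $t$. That the Wild series in turn solves (\ref{eq:evolution}) is automatic from this construction; as a sanity check one can verify it directly, since the geometric weights $a_n(t)=e^{-\lambda t}(1-e^{-\lambda t})^{n-1}$ satisfy $\dot a_N=\lambda\big(\sum_{n=1}^{N-1}a_n a_{N-n}-a_N\big)$ for every $N\ge1$.

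The step I expect to be the main obstacle is the uniqueness bookkeeping rather than any single computation. One must pin down the class of admissible solutions — weakly continuous probability-measure-valued paths, so that the time integral in (\ref{eq:evolution}) makes sense — check that the passage to the Fourier transform is reversible within that class, and, crucially, invoke the a priori bound $|\widehat{\mu_s}(u)|\le1$ to prevent the quadratic nonlinearity $\varphi\mapsto\varphi^2-\varphi$ from producing finite-time blow-up, so that the scalar ODE genuinely has a unique global solution. The explicit integration and the geometric-series expansion are routine by comparison.
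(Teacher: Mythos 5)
Your proposal is correct and follows essentially the same route as the paper: pass to the Fourier transform, solve the resulting Bernoulli equation $\dot\varphi=\lambda(\varphi^2-\varphi)$ in closed form (your expression is the paper's solution rewritten), expand it as a geometric series, and identify the result with the transform of the Wild sum via Fourier uniqueness. The additional care you take with the non-vanishing denominator, the bound $|\varphi|\le 1$, and the uniqueness bookkeeping only makes explicit what the paper leaves implicit.
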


\begin{proof}
As in Duffie and Manso (2007), we write the evolution equation
(\ref{eq:evolution}) in terms of the Fourier transform $
\varphi(\,\cdot\,,t)$ of $\mu_t$, as
\begin{equation}
\frac{\partial \varphi(s,t)}{\partial t}=-\lambda \varphi(s,t) +
\lambda \varphi^2(s,t),
\end{equation}
with solution
\begin{equation}
\varphi(s,t) = \frac{\varphi(s,0)}{e^{\lambda t}(1-\varphi(s,0)) + \varphi(s,0)}.
\end{equation}
This solution can be expanded as
\begin{equation}
\varphi(s,t) =  \sum\limits_{n\geq1} e^{- \lambda t}(1-e^{- \lambda
  t})^{n-1} \varphi^n(s,0),
\end{equation}
which is identical to the Fourier transform of the right-hand side
of (\ref{eq:wild}).
\end{proof}

\bigskip

This solution for the cross-sectional distribution of types is
converted to an explicit distribution for the cross-sectional
distribution $\pi_t$ of posterior probabilities that $X=H$,
 using the fact that
\begin{equation} \pi_t(0,b)=\mu_t\left(-\infty,\, 
\log_{1/2}\frac{(1-b)\nu}{(1-\nu)b}\right). \label{conversion}
\end{equation}
Like $\mu_t$, the beliefs distribution $\pi_t$ has an outcome that
differs depending on whether $X=H$ or $X=L$.

We now provide explicit rates of convergence of the
cross-sectional distribution of beliefs to a common posterior. In
our setting, it turns out that all agents' beliefs converge to that
of complete information, in that any agent's posterior probability
of the event $\{X=H\}$ converges to 1 on this event and to zero
otherwise. In general, we say that $\pi_t$ converges to a common
posterior distribution $\pi_\infty$ if, almost surely, $\pi_t$
converges in distribution to $\pi_\infty$, and we say that convergence
is exponential at the rate $\alpha>0$ if there are constants $\kappa_0$ and
$\kappa_1$ such that, for any $b$ in $(0,1),$
$$e^{-\alpha t}\kappa_0\leq \vert \pi_t(0,b)-\pi_\infty(0,b)\vert
\leq e^{-\alpha t}\kappa_1.$$ Thus, if there is a rate of
convergence, it is unique.

\begin{proposition}
Convergence of the cross-sectional distribution of beliefs 
to that of complete information is exponential
at the rate $\lambda$.
\end{proposition}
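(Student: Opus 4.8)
The plan is to work directly from the Wild summation of Proposition~\ref{prop:wild} together with the conversion formula~\eqref{conversion}. I will treat the event $\{X=H\}$; on $\{X=L\}$ the argument is symmetric, with the roles of the two tails of $\mu_t$ interchanged. On $\{X=H\}$ the limiting belief is that of complete information, so $\pi_\infty$ is the unit mass at $1$ and $\pi_\infty(0,b)=0$ for every $b\in(0,1)$. Fix $b\in(0,1)$ and put $c=\log_{1/2}\frac{(1-b)\nu}{(1-\nu)b}$. By~\eqref{conversion} the claim reduces to showing that the left tail $\mu_t(-\infty,c)$ decays exactly at rate $\lambda$, i.e.\ that there are positive constants $\kappa_0,\kappa_1$ (allowed to depend on $b$) with $\kappa_0 e^{-\lambda t}\le\mu_t(-\infty,c)\le\kappa_1 e^{-\lambda t}$ for all $t\ge 0$.

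Substituting~\eqref{eq:wild} and writing $a_n=\mu_0^{*n}(-\infty,c)$,
$$\mu_t(-\infty,c)=\sum_{n\ge 1}e^{-\lambda t}(1-e^{-\lambda t})^{n-1}a_n .$$
Each $a_n$ lies in $[0,1]$, so the series is bounded below by its leading term $e^{-\lambda t}a_1=e^{-\lambda t}\mu_0(-\infty,c)$ and, using $(1-e^{-\lambda t})^{n-1}\le 1$, bounded above by $e^{-\lambda t}\sum_{n\ge 1}a_n$. The lower bound is thus immediate with $\kappa_0=\mu_0(-\infty,c)$, which the standing assumptions on $\mu_0$ make strictly positive because a positive mass of agents has type below $c$ (in a borderline case where $\mu_0$ itself does not charge $(-\infty,c)$ one keeps instead the first index $n$ with $a_n>0$). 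The real content is therefore to show that $\sum_{n\ge 1}a_n<\infty$, which then delivers the upper bound with $\kappa_1=\sum_{n\ge 1}a_n$.

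To get the geometric decay of $a_n$ I would use a Chernoff estimate on convolution tails. Let $M(z)=\int e^{z\theta}\,\mu_0(d\theta)$, finite near $z=0$ by hypothesis. Since $M(0)=1$ and the derivative of $z\mapsto M(-z)$ at $z=0$ equals $-m_1(\mu_0)<0$ on $\{X=H\}$, we have $\rho:=M(-z)<1$ for some small $z>0$. Because $a_n=\mathrm{P}(\theta_1+\cdots+\theta_n<c)$ for i.i.d.\ $\theta_i\sim\mu_0$, Markov's inequality applied to $e^{-z(\theta_1+\cdots+\theta_n)}$ gives $a_n\le e^{zc}M(-z)^n=e^{zc}\rho^{\,n}$, so $\sum_{n\ge 1}a_n\le e^{zc}\rho/(1-\rho)<\infty$. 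This finishes the upper bound; hence $\pi_t(0,b)=\mu_t(-\infty,c)$ is squeezed between constant multiples of $e^{-\lambda t}$, and by the uniqueness remark preceding the statement the rate of convergence is exactly $\lambda$.

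The main obstacle is precisely this large-deviation bound on the convolution masses $a_n$: without it, dropping $(1-e^{-\lambda t})^{n-1}\le 1$ term by term only yields the vacuous estimate $\mu_t(-\infty,c)\le 1$, so the exponential rate $\lambda$ emerges only from combining the Poisson weights $e^{-\lambda t}(1-e^{-\lambda t})^{n-1}$ with the geometric decay of $a_n$. The remaining pieces---the reduction through~\eqref{conversion}, the lower bound from the leading Wild term, and the mirror-image treatment of $\{X=L\}$---are routine, the only delicate point being the degenerate situation where every $\mu_0^{*n}$ misses the relevant tail, in which $\pi_t(0,b)$ is identically zero and the two-sided estimate must be read accordingly.
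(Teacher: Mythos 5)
Your proposal is correct and follows essentially the same route as the paper: reduce via the conversion formula \eqref{conversion} to the tail $\mu_t(-\infty,c)$, get the lower bound from the leading term of the Wild sum \eqref{eq:wild}, and get the upper bound by combining the weights $e^{-\lambda t}(1-e^{-\lambda t})^{n-1}$ with a Chernoff/moment-generating-function bound showing the convolution tails $\mu_0^{*n}(-\infty,c)$ decay geometrically. The only (cosmetic) difference is that you apply the Chernoff bound uniformly in $n$ with the fixed threshold absorbed into the factor $e^{zc}$, whereas the paper splits off the first $n_0$ terms before applying the same estimate to the shifted variables $Y_i-a/n$.
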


\begin{proof}
Because of (\ref{conversion}), the rate of convergence of $\pi_t$ is
the same as the rate of convergence to zero or 1, for any $a$, of
$\mu_t(-\infty,a)$. We will provide the rate of convergence to zero
of $\mu_t(-\infty,a)$ on the event $X=H$. A like argument gives the
same rate of convergence to 1 on the event $X=L$.

From ($\ref{eq:wild}$),
\begin{equation}
\mu_t(-\infty,a) \geq e^{-\lambda t} \mu_0(-\infty,a).
\label{eq:lower}
\end{equation}
We fix $n_0$ such that $m_1(\mu_0)>a/n$ for $n>n_0$ and we let
$\lbrace Y_n \rbrace_{n\geq 1}$ be independent random variables
with distribution $\mu_0$. Then,
\begin{eqnarray}
\mu_t(-\infty,a) & = & \sum\limits_{n = 1}^{n_0} e^{-\lambda t} (1-e^{-\lambda t})^{n-1}
\operatorname{P}\left[\sum_{i=1}^n\left(Y_i-\frac{a}{n}\right) \leq 0 \right] \nonumber \\
                 &      & + \sum\limits_{n \,= \, n_0+1}^{\infty} e^{-\lambda t} (1-e^{-\lambda t})^{n-1}
\operatorname{P}\left[\sum_{i=1}^n\left(Y_i-\frac{a}{n}\right) \leq 0 \right].
\label{eq:upper}
\end{eqnarray}
It is clear that there exists a constant $\beta$ such that, for all
$t$, the first term on the right-hand side of equation
($\ref{eq:upper}$) is less than $\beta e^{-\lambda t}$. Therefore,
we only need to worry about the second term on the right-hand side
of equation ($\ref{eq:upper}$). From a standard result in
probability theory,\footnote{See, for example, Rosenthal (2000), pp.
90-92.} if $Y$ is a random variable with a finite strictly positive
mean and a moment generating function that is finite on $(-c,0]$ for
some $c>0$, then
$\operatorname{P}(Y \leq 0) \leq \inf\limits_{-c<s<0} E[e^{sY}] < 1$.
For $n>n_0$, for some fixed $c>0$, we then have
\begin{equation}
\operatorname{P}\left[\sum_{i=1}^n\left(Y_i-\frac{a}{n}\right) \leq 0 \right] \leq
\left(\inf\limits_{-c<s<0} E\left[e^{s(Y_1-a/n)}\right]\right)^n
\leq  e^{ac} \gamma^n, \label{eq:gamma}
\end{equation}
where $\gamma < 1$. From ($\ref{eq:gamma}$), we conclude that the
second term on the right-hand side of equation ($\ref{eq:upper}$) is
bounded by $e^{ac} \frac{\gamma}{1-\gamma} e^{-\lambda t}$.
Therefore,
\begin{equation}
\mu_0(-\infty,a) e^{-\lambda t} \leq \mu_t(-\infty,a) \leq \left(\beta + e^{a c}
\frac{\gamma}{1-\gamma}\right) e^{-\lambda t},
\end{equation}
and the proof is complete.
\end{proof}

\section{Multi-Agent Meetings}
\label{sec:m}

For the case of $m$ agents at each meeting, the evolution of the
cross-sectional distribution of types is similarly given by:
\begin{equation}
\mu_t = \mu_0 + \lambda \int_0^t (\mu_s^{*m}-\mu_s)\, ds,
\label{eq:evolutionm}
\end{equation}
as explained in Duffie and Manso (2007). A solution for the
cross-sectional distribution of beliefs at any time $t$ is given
explicitly by (\ref{conversion}) and the following extension of the
Wild summation formula for the type distribution.

\begin{proposition}
The unique solution of (\ref{eq:evolutionm}) is
\begin{equation}
\mu_t = \sum\limits_{n\geq1} a_{[(m-1)(n-1)+1]} e^{-\lambda t}
(1-e^{-(m-1) \lambda t})^{n-1}\mu_0^{*[(m-1)(n-1)+1]},
\label{eq:wildm}
\end{equation}
where $a_1=1$ and, for $n>1$,
\begin{equation}
a_{(m-1)(n-1)+1} = \frac{1}{m-1}\left(1-
   \sum\limits_{\left\lbrace  i_1,\ldots,i_{(m-1)}<n \atop \sum i_{k}=n+m-2 \right\rbrace}
    \prod\limits_{k=1}\limits^{m-1} a_{[(m-1)(i_k-1)+1]} \right).
\end{equation}
\label{prop:wildm}
\end{proposition}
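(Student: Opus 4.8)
The plan is to follow the proof of Proposition~\ref{prop:wild}: pass to Fourier transforms, solve the resulting scalar ordinary differential equation in $t$ at each fixed frequency, expand the closed form as a power series, and match its coefficients against~(\ref{eq:wildm}). Writing $\varphi(\,\cdot\,,t)$ for the Fourier transform of $\mu_t$, equation~(\ref{eq:evolutionm}) becomes the Bernoulli equation $\partial_t\varphi=-\lambda\varphi+\lambda\varphi^{m}$. For fixed $s$ the substitution $u=\varphi^{1-m}$ linearizes this to $\dot u=(m-1)\lambda(u-1)$ and suggests the solution
\[
\varphi(s,t)=e^{-\lambda t}\,\varphi(s,0)\,\Bigl(1-\bigl(1-e^{-(m-1)\lambda t}\bigr)\varphi(s,0)^{m-1}\Bigr)^{-1/(m-1)},
\]
which one then checks directly, thereby sidestepping branch bookkeeping for complex $\varphi$; note that the base $1-(1-e^{-(m-1)\lambda t})\varphi(s,0)^{m-1}$ lies within distance $1-e^{-(m-1)\lambda t}<1$ of $1$ because $|\varphi(s,0)|\le 1$, so the principal branch is unambiguous. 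For $m=2$ this reduces to the closed form in the proof of Proposition~\ref{prop:wild}.

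Next I would expand by the generalized binomial theorem. With $x=\bigl(1-e^{-(m-1)\lambda t}\bigr)\varphi(s,0)^{m-1}$, which satisfies $|x|<1$ for every finite $t$,
\[
(1-x)^{-1/(m-1)}=\sum_{k\ge 0}c_k\,x^{k},\qquad c_k=(-1)^k\binom{-1/(m-1)}{k}=\frac{\tfrac1{m-1}\bigl(\tfrac1{m-1}+1\bigr)\cdots\bigl(\tfrac1{m-1}+k-1\bigr)}{k!}>0 .
\]
Substituting and reindexing by $n=k+1$ gives
\[
\varphi(s,t)=\sum_{n\ge 1}c_{n-1}\,e^{-\lambda t}\bigl(1-e^{-(m-1)\lambda t}\bigr)^{n-1}\varphi(s,0)^{(m-1)(n-1)+1},
\]
which is the Fourier transform of the right side of~(\ref{eq:wildm}) provided $a_{[(m-1)(n-1)+1]}=c_{n-1}$. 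The associated series of measures converges in total variation, since each convolution power has mass $1$ and $\sum_{k\ge0}c_k\bigl(1-e^{-(m-1)\lambda t}\bigr)^k=e^{\lambda t}<\infty$; the same computation shows the weights $c_{n-1}e^{-\lambda t}(1-e^{-(m-1)\lambda t})^{n-1}$ are nonnegative and sum to $1$ (so $\mu_t$ is a genuine mixture), which also legitimizes term-by-term Fourier inversion.

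The crux is then to identify the recursively defined $a$'s with the $c$'s. Put $b_n=a_{[(m-1)(n-1)+1]}$ and $g(x)=\sum_{n\ge1}b_n x^{n-1}$; I claim the displayed recursion says exactly that $g(x)^{m-1}=(1-x)^{-1}$. Indeed the coefficient of $x^{n-1}$ in $g^{m-1}$ is $\sum_{i_1+\cdots+i_{m-1}=n+m-2,\ i_k\ge 1}\prod_k b_{i_k}$; for $n\ge 2$ the only terms in this sum with some $i_k=n$ are the $m-1$ obtained by setting one $i_k=n$ and the rest equal to $1$, each contributing $b_n b_1^{m-2}=b_n$ since $b_1=a_1=1$. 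Hence that coefficient equals $(m-1)b_n$ plus the constrained sum $\sum_{\{i_1,\ldots,i_{m-1}<n,\ \sum i_k=n+m-2\}}\prod_k b_{i_k}$ of the statement, and the recursion asserts this total is $1$ for every $n\ge 2$; for $n=1$ the coefficient is $b_1^{m-1}=1$. Thus $g^{m-1}=1+x+x^2+\cdots=(1-x)^{-1}$, and since $g(0)=b_1=1$ the unique power series with this property is $g(x)=(1-x)^{-1/(m-1)}=\sum_k c_k x^k$, so $b_n=c_{n-1}$, as needed.

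For uniqueness I would note that~(\ref{eq:evolutionm}) preserves total mass, since $\int(\mu_s^{*m}-\mu_s)=0$ when $\mu_s$ is a probability measure; so any solution is probability-measure-valued, its Fourier transform has modulus at most $1$ and solves the Bernoulli ODE with initial value $\varphi(s,0)$, and ODE uniqueness forces it to equal the explicit solution for every $s$, which determines the measure. (Alternatively, $\mu\mapsto\mu^{*m}-\mu$ is locally Lipschitz in total-variation norm, and one applies the Picard--Lindel\"of argument directly to~(\ref{eq:evolutionm}).) The step I expect to be the main obstacle is the combinatorial one: seeing that the side condition ``$i_k<n$'' in the stated recursion is precisely what excises the $m-1$ ``boundary'' monomials that have been transposed to the left-hand side to define $b_n$, so that the recursion collapses to the clean Cauchy-product identity $g^{m-1}=(1-x)^{-1}$. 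The analytic points---convergence of the Wild series of measures and term-by-term inversion of the Fourier transform---are routine once the ODE has been solved.
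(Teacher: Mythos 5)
Your proof is correct, but it takes a genuinely different route from the paper's. The paper deliberately avoids fractional powers of the transform: it solves only for $\varphi^{m-1}$, deduces the Wild form of $\mu_t^{*(m-1)}$ by analogy with Proposition \ref{prop:wild}, shows by a direct Cauchy-product computation that the candidate series $\nu_t$ satisfies $\nu_t^{*(m-1)}=\mu_t^{*(m-1)}$ (the recursion for the $a$'s is exactly the statement that the product coefficients $\beta_n$ equal $1$), and then must argue that a distribution is determined by its $(m-1)$-fold convolution --- a step it settles with a moment-generating-function argument that leans on the standing assumptions that $\mu_0$ has an MGF near zero and a nonzero first moment conditional on $X$. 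You instead solve for $\varphi$ itself via the principal branch of $(1-x)^{-1/(m-1)}$ (legitimate, as you note, because $\vert x\vert\leq 1-e^{-(m-1)\lambda t}<1$ when $\vert\varphi(s,0)\vert\leq1$, and verifiable directly against the ODE), expand by the generalized binomial theorem, and then show that the recursion defining the $a$'s is precisely the statement $g(x)^{m-1}=(1-x)^{-1}$ for the generating function $g$, so that $a_{[(m-1)(n-1)+1]}=c_{n-1}$; this is the same combinatorial identity the paper uses, but deployed on the power-series side rather than the measure-convolution side. What your route buys: an explicit closed form for the coefficients $a$, the observation that the weights are nonnegative and sum to one (so (\ref{eq:wildm}) is a genuine mixture and the series converges in total variation), and a proof that does not need the MGF/first-moment hypotheses at the uniqueness step; your uniqueness argument (mass conservation plus pointwise ODE uniqueness, or Picard--Lindel\"of in total variation) is also spelled out more explicitly than in the paper. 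What the paper's route buys: it never leaves the algebra of convolutions of real measures, so no branch bookkeeping is needed at all. One small caveat: your remark that (\ref{eq:evolutionm}) ``preserves total mass'' presupposes that solutions are probability-measure-valued, which is the intended solution class but worth stating as such; this is a matter of framing, not a gap.
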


\begin{proof}
From (\ref{eq:evolutionm}), the Fourier transform of $\mu_t$
satisfies
\begin{equation}
\frac{\partial \varphi(s,t)}{\partial t}=-\lambda \varphi(s,t) +
\lambda \varphi^m(s,t),
\end{equation}
whose solution satisfies
\begin{equation}
\varphi(s,t)^{m-1}=\frac{\varphi(s,0)^{m-1}}{e^{(m-1)\lambda t}(1-\varphi^{m-1}(s,0))+\varphi^{m-1}(s,0)}.
\end{equation}
Following steps analogous to those of Proposition $\ref{prop:wild}$,
\begin{equation}
\mu_t^{*(m-1)} = \sum\limits_{n\geq1} e^{-(m-1) \lambda t} (1-e^{-(m-1)
  \lambda t})^{n-1} \mu_0^{*(m-1)n}.
\label{eq:wildmconv}
\end{equation}

Let $\nu_t$ denote the right-hand side of ($\ref{eq:wildm}$). By
recursively calculating the convolution,
\begin{eqnarray}
\nu_t^{*(m-1)} & = & \left(\sum\limits_{n\geq1} a_{[(m-1)(n-1)+1]}
e^{-\lambda t}
(1-e^{-(m-1) \lambda t})^{n-1}\mu_0^{*[(m-1)(n-1)+1]}\right)^{*(m-1)} \nonumber\\
             & = & \sum\limits_{n\geq1} \beta_n e^{-(m-1) \lambda t} (1-e^{-(m-1)
             \lambda t})^{n-1}\mu_0^{*(m-1)n}  \\
             & = & \sum\limits_{n\geq1} e^{-(m-1) \lambda t} (1-e^{-(m-1)
 \lambda t})^{n-1} \mu_0^{*(m-1)n},
\end{eqnarray}
where
\[\beta_n=\sum\limits_{\left\lbrace  i_1,\ldots,i_{(m-1)} \atop
                   \sum i_{k}=n+m-2 \right\rbrace}
              \prod\limits_{k=1}\limits^{m-1}
               a_{[(m-1)(i_k-1)+1]},\]
and where  the last equality follows from the definition of $
a_{[(m-1)(n-1)+1]}$ for $n\geq 1$. Thus,
$\nu_t^{*(m-1)}=\mu_t^{*(m-1)},$ and it remains to show that the
distribution $\mu_t$ is uniquely characterized by its convolution of
order $m-1$. This follows\footnote{Because, on $\{X=H\}$, the
derivative of the moment generating function of $\mu_0$ at zero is
the first moment of $\mu_0$, which is positive, the moment
generating function is strictly less than 1 in an interval
$(-\epsilon,0]$, for a sufficiently small $\epsilon>0$. This implies
that there is an analogous explicit solution for the moment
generating function of $\mu_t^{*n}$, for any $n$ and $t$, on a small
negative interval. The $(m-1)$-st root of the moment generating
function of $\mu_t^{*(m-1)}$, on such an interval, uniquely
determines the associated measure $\mu_t$. For additional details,
see Billingsley (1986), p. 408.} from the fact that $\mu_0$, and
therefore $\mu_t^{*k}$ for any $t$ and $k$, have a moment generating
function in a neighborhood of zero and a non-zero first moment on
the event $\{X=H\}$.
\end{proof}

\begin{proposition}
For any meeting group size $m$,  convergence of the cross-sectional 
distribution of beliefs to that of complete information 
is exponential at the rate $\lambda$.
\end{proposition}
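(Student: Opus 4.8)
The plan is to follow the template of the $m=2$ convergence proof, replacing the Wild summation (\ref{eq:wild}) by its extension (\ref{eq:wildm}). By the conversion identity (\ref{conversion}) and the fact that on $\{X=H\}$ the common posterior of complete information assigns $\pi_\infty(0,b)=0$ while on $\{X=L\}$ it assigns $\pi_\infty(0,b)=1$, it suffices to show that, for every real $a$, $\mu_t(-\infty,a)\to 0$ exponentially at rate exactly $\lambda$ on $\{X=H\}$; the statement on $\{X=L\}$, where $m_1(\mu_0)<0$, then follows from the same argument applied to $-Y$.

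The essential preliminary step is to verify that the coefficients $a_{[(m-1)(n-1)+1]}$ in (\ref{eq:wildm}) are nonnegative. Taking the $(m-1)$-st root in the solution of the Fourier ODE displayed in the proof of Proposition \ref{prop:wildm} gives
\[
\varphi(s,t)=e^{-\lambda t}\,\varphi(s,0)\,\bigl(1-(1-e^{-(m-1)\lambda t})\varphi(s,0)^{m-1}\bigr)^{-1/(m-1)},
\]
and comparing the Taylor expansion $(1-x)^{-1/(m-1)}=\sum_{j\ge 0}c_j x^j$ with the Fourier transform of (\ref{eq:wildm}) identifies $a_{[(m-1)(n-1)+1]}=c_{n-1}$, where $c_j=\binom{-1/(m-1)}{j}(-1)^j=\tfrac1{j!}\prod_{l=0}^{j-1}\bigl(l+\tfrac1{m-1}\bigr)\ge 0$. (Equivalently, one checks directly that $c_{n-1}$ obeys the recursion defining $a_{[(m-1)(n-1)+1]}$ by matching coefficients of $x^{n-1}$ in $\bigl(\sum_j c_j x^j\bigr)^{m-1}=(1-x)^{-1}$.) Consequently each summand of (\ref{eq:wildm}) is a nonnegative multiple of a probability measure, so we may evaluate $\mu_t$ on $(-\infty,a)$ term by term.

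The lower bound is then immediate from the $n=1$ term: $\mu_t(-\infty,a)\ge a_1 e^{-\lambda t}\mu_0(-\infty,a)=e^{-\lambda t}\mu_0(-\infty,a)$. For the upper bound, let $\{Y_i\}$ be i.i.d. with law $\mu_0$ and set $k(n)=(m-1)(n-1)+1$, so that $\mu_0^{*k(n)}(-\infty,a)=\operatorname{P}[\sum_{i=1}^{k(n)}Y_i\le a]$. Since $m_1(\mu_0)>0$ and $\mu_0$ has a moment generating function near $0$, there is $s^*<0$ with $\gamma_0:=E[e^{s^*Y_1}]\in(0,1)$, and Chernoff's inequality yields $\operatorname{P}[\sum_{i=1}^{k(n)}Y_i\le a]\le e^{-s^*a}\gamma_0^{\,k(n)}$. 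Substituting this into (\ref{eq:wildm}), factoring $\gamma_0^{\,k(n)}=\gamma_0(\gamma_0^{m-1})^{n-1}$, and resumming with $x=(1-e^{-(m-1)\lambda t})\gamma_0^{m-1}\le\gamma_0^{m-1}<1$ gives
\[
\mu_t(-\infty,a)\le e^{-\lambda t}\,e^{-s^*a}\,\gamma_0\bigl(1-\gamma_0^{m-1}\bigr)^{-1/(m-1)}.
\]
Combining, $\kappa_0 e^{-\lambda t}\le\mu_t(-\infty,a)\le\kappa_1 e^{-\lambda t}$ for suitable $\kappa_0,\kappa_1$, so the rate is $\lambda$, and (\ref{conversion}) transfers this to $\pi_t$.

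The step I expect to be the main obstacle is the uniform-in-$t$ control of the series in the upper bound. One cannot bound $\mu_0^{*k(n)}(-\infty,a)$ crudely by $1$, because the coefficients $a_{[(m-1)(n-1)+1]}$ grow with $n$: indeed $\sum_{n\ge1}a_{[(m-1)(n-1)+1]}(1-e^{-(m-1)\lambda t})^{n-1}=e^{\lambda t}$, which is precisely the factor that makes $\mu_t$ a probability measure. The large-deviations factor $\gamma_0^{\,k(n)}$ is therefore indispensable; it is what brings the resummation parameter $x=(1-e^{-(m-1)\lambda t})\gamma_0^{m-1}$ safely and uniformly below $1$, so that $\sum_{n\ge1}c_{n-1}x^{n-1}=(1-x)^{-1/(m-1)}$ stays bounded as $t\to\infty$. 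A secondary point to get right is the nonnegativity of the Wild coefficients, without which neither the term-by-term reading of (\ref{eq:wildm}) nor the $n=1$ lower bound would be justified.
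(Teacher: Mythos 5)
Your proof is correct, but your upper bound follows a genuinely different route from the paper's. The paper never touches the coefficients $a_{[(m-1)(n-1)+1]}$: it instead uses the identity (\ref{eq:wildmconv}), which says that $\mu_t^{*(m-1)}$ obeys a standard ($m=2$-style) Wild summation, applies the Section \ref{sec:two} argument verbatim to get $\mu_t^{*(m-1)}(-\infty,(m-1)a)\leq \kappa e^{-(m-1)\lambda t}$, and then extracts the bound on $\mu_t$ itself from the elementary inequality $(\mu_t(-\infty,a))^{m-1}\leq \mu_t^{*(m-1)}(-\infty,(m-1)a)$, taking $(m-1)$-st roots. You work directly with the $m$-agent series (\ref{eq:wildm}): you identify $a_{[(m-1)(n-1)+1]}$ with the Taylor coefficients of $(1-x)^{-1/(m-1)}$ (your verification via the recursion is right, since the excluded indices $i_k=n$ force the remaining ones to equal $1$ and contribute exactly $(m-1)a_{[(m-1)(n-1)+1]}$), then apply a Chernoff bound term by term and resum in closed form, with $x=(1-e^{-(m-1)\lambda t})\gamma_0^{m-1}$ bounded away from $1$ uniformly in $t$. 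What each approach buys: the paper's reduction is shorter and recycles the two-agent estimate wholesale, requiring no knowledge of the Wild coefficients; yours yields an explicit formula $a_{[(m-1)(n-1)+1]}=\binom{-1/(m-1)}{n-1}(-1)^{n-1}$ with explicit constants, and it makes explicit the nonnegativity of the coefficients — a fact the paper uses only implicitly when it drops all terms $n\geq 2$ to assert the lower bound (\ref{eq:upper2}), so your preliminary step is a worthwhile addition rather than wasted effort. Both proofs share the same reduction via (\ref{conversion}) and the same $n=1$ lower bound, and both leave to a symmetric argument the case $\{X=L\}$.
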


\proof Again, it is enough to derive the rate of convergence of
$\mu_t(-\infty,a)$ to zero on the event $\{X=H\}$. From
($\ref{eq:wildm}$),
\begin{equation}
\mu_t(-\infty,a) \geq e^{-\lambda t} \mu_0(-\infty,a).
\label{eq:upper2}
\end{equation}
Now, from ($\ref{eq:wildmconv}$) and our analysis in Section
$\ref{sec:two}$, we know that for some constant $\kappa>0,$
\begin{equation}
\mu^{*(m-1)}(-\infty,(m-1)a) \leq \kappa e^{-(m-1) \lambda t}.
\end{equation}
From the fact that
\begin{equation}
(\mu(-\infty,a))^{m-1} \leq \mu^{*(m-1)}(-\infty,(m-1)a),
\end{equation}
we conclude that
\begin{equation}
\mu(-\infty,a) \leq \kappa^{\frac{1}{m-1}} e^{- \lambda t}.
\label{eq:lower2}
\end{equation}
From ($\ref{eq:upper2}$) and ($\ref{eq:lower2}$), it follows that the
rate of convergence of $\mu_t(-\infty,a)$ to zero is $\lambda$,
completing the proof.
\endproof

\bigskip
Because the expected rate at which a particular individual enters
meetings is $\lambda$ per year, independence and a formal
application of the law of large numbers implies that the total
quantity of $m$-agent meetings per year is $\lambda/m$, almost
surely. So the total annual attendance at meetings is almost surely
$\lambda$ per year, invariant%
\footnote{This is not about large numbers, or uncertainty.
For example, suppose each member of a group $\{A,B,C,D\}$ of 4
agents holds one meeting with a different member of the group.
For example, $A$ meets with $B$, and $C$ meets with $D$. Then
there is a total of two meetings, and each individual attends
one meeting. If the 4 agents meet all together, once,
we have the same total attendance, and the same rate
at which each individual attends a meeting.}
to $m$.
Our results show that total attendance at meetings is what
matters for information convergence rates.

We have not shown that our invariance result extends from the case
of a constant group size to a model in which the group size varies
at random from meeting to meeting, say with the same mean group size
across meetings. This is in fact the case under technical conditions
on the distribution of group sizes, as related to us by Semyon Malamud 
in a subsequent private communication.

\bigskip

\quad

\bigskip

\bigskip\noindent
{\bf References}

\bigskip\noindent
Arrow, Kenneth. 1974. {\it The Limits of Organization}. Norton, New York.

\bigskip\noindent
Banerjee, Abhijit, and Drew Fudenberg. 2004. ``Word-of-Mouth Learning.''
{\it Games and Economic Behavior}, {\bf 46:} 1-22.

\bigskip\noindent
Billingsley, Patrick. 1986. {\it Probability and Measure, Second
Edition}, Wiley, New York.

\bigskip\noindent
Duffie, Darrell, Nicolae G\^ arleanu, and Lasse Heje Pedersen. 2005.
``Over-the-Counter Markets.'' {\it Econometrica}, {\bf 73:}
1815-1847.

\bigskip\noindent
Duffie, Darrell, and Gustavo Manso. 2007. ``Information Percolation in
Large Markets.''
{\it American Economic Review Papers and Proceedings}, {\bf 97:} 203-209.

\bigskip\noindent
Duffie, Darrell,  and Yeneng Sun. 2007. ``Existence of Independent
Random Matching.'' {\it Annals of Applied Probability}, {\bf 17:}
386-419.

\bigskip\noindent
Duffie, Darrell, and Yeneng Sun. 2005. ``The Exact Law of Large
Numbers for Independent Random Matching.'' Stanford University
Working Paper.

\bigskip\noindent
Gale, Douglas. 1986a. ``Bargaining and Competition Part I: Characterization.'' {\it Econometrica}, {\bf 54:} 785-806.

\bigskip\noindent
Gale, Douglas. 1986b. ``Bargaining and Competition Part II: Existence.'' {\it Econometrica}, {\bf 54:} 807-818.

\bigskip\noindent
Giroux, Gaston. 2005. ``Markets of a Large Number of Interacting
Agents.'' Working Paper (available from the author upon request).

\bigskip\noindent
Hayek, Friedrich. 1945. ``The Use of Knowledge in Society.'' {\it American Economic Review}, {\bf 35:} 51\
9-530.

\bigskip\noindent
Malamud, Semyon. 2007. ``Private Communication of November 19, 2007,''  Department of Mathematics, ETH, Zurich.

\bigskip\noindent
Milgrom, Paul. 1981. ``Rational Expectations, Information Acquisition, and Competitive Bidding.''
{\it Econometrica}, {\bf 50:} 1089-1122. 

\bigskip\noindent
Pesendorfer, Wolfgang and Jeroen Swinkels. 1997. ``The Loser's Curse and Information Aggregation in 
Common Value Auctions.'' {\it Econometrica}, {\bf 65:} 1247-1281.

\bigskip\noindent
Reny, Philip and Motty Perry. 2006. ``Toward a Strategic Foundation for Rational Expectations Equilibrium.''
{\it Econometrica}, {\bf 74:} 1231-1269.

\bigskip\noindent
Rosenthal, Jeffrey. 2000. {\it A First Look at Rigorous Probability
Theory}. World Scientific. River Edge, New Jersey.

\bigskip\noindent
Rubinstein, Ariel and Asher Wolinsky. 1985. ``Equilibrium in a Market With Sequential Bargaining.'' {\it Econometrica}, 
{\bf 53:} 1133-1150.

\bigskip\noindent
Satterthwaite, Mark, and Artyom Shneyerov. 2007. ``Dynamic Matching,
Two-sided Incomplete Information, and Participation Costs: Existence
and Convergence to Perfect Competition.'' {\it Econometrica}
{\bf 75:} 155-200.

\bigskip\noindent
Vives, Xavier. 1993. ``How Fast do Rational Agents Learn.''
{\it Review of Economic Studies}, {\bf 60:}
329-347.

\bigskip\noindent
Wild, E.  1951. ``On Boltzmann's Equation in the Kinetic Theory of
Gases,'' {\it Proceedings of the Cambridge Philosophical Society},
{\bf 47:} 602-609.

\bigskip\noindent
Wilson, Robert. 1977. ``Incentive Efficiency of Double Auctions,''
{\it The Review of Economic Studies}, {\bf 44:} 511-518.

\end{document}